\newcommand{\NN}{\mathbf{N}}
\newcommand{\QQ}{\mathbf{Q}}
\newcommand{\RR}{\mathbf{R}}
\newcommand{\eps}{\varepsilon}
\renewcommand{\leq}{\leqslant}
\renewcommand{\geq}{\geqslant}
\DeclareMathOperator{\p}{P}
\DeclareMathOperator{\pp}{\mathbf{p}}
\DeclareMathOperator{\dom}{dom}
\newtheorem{proposition}{Proposition}
\newtheorem{theorem}{Theorem}
\newtheorem{corollary}{Corollary}
\newtheorem{lemma}{Lemma}
\theoremstyle{remark}
\newtheorem{remark}{Remark}
\begin{document}

\author{Bal\'azs Maga}
\author{P\'eter Maga}

\address{E\"otv\"os Lor\'and University, P\'azm\'any P\'eter s\'et\'any 1/C, Budapest, H-1117 Hungary}
\email{magab@cs.elte.hu}
\address{MTA Alfr\'ed R\'enyi Institute of Mathematics, POB 127, Budapest H-1364, Hungary}
\email{magapeter@gmail.com}
\address{MTA R\'enyi Int\'ezet Lend\"ulet Automorphic Research Group}

\title{Random power series near the endpoint of the convergence interval}

\thanks{The first author was supported by the \'Uj Nemzeti Kiv\'al\'os\'ag Program grant \'UNKP-17-2 of the Hungarian Ministry of Human Capacities. The second author was supported by the Postdoctoral Fellowship of the Hungarian Academy of Sciences and by NKFIH (National Research, Development and Innovation Office) grants NK~104183 and ERC\underline{\phantom{ }}HU\underline{\phantom{ }}15~118946.}

\subjclass[2010]{Primary: 60F20; Secondary: 11A63, 54E52}
\keywords{real random power series, boundary behaviour, zero-one laws, residuality}
\begin{abstract}
In this paper, we are going to consider power series
\begin{displaymath}
\sum_{n=1}^{\infty} a_nx^n,
\end{displaymath}
where the coefficients $a_n$ are chosen independently at random from a finite set with uniform distribution. We prove that if the expected value of the coefficients is positive (resp. negative), then
\begin{displaymath}
\lim_{x\to 1-}\sum_{n=1}^{\infty} a_nx^n=\infty\qquad
(\text{resp. }\lim_{x\to 1-}\sum_{n=1}^{\infty} a_nx^n=-\infty)
\end{displaymath} with probability $1$. Also, if the expected value of the coefficients is $0$, then
\begin{displaymath}
\limsup_{x\to 1-}\sum_{n=1}^{\infty} a_nx^n=\infty,\qquad \liminf_{x\to 1-}\sum_{n=1}^{\infty} a_nx^n=-\infty
\end{displaymath}
with probability $1$. We investigate the analogous question in terms of Baire categories.
\end{abstract}

\maketitle

\section{Introduction}

In complex analysis, the behaviour of random power series near the radius of convergence has been thorougly examined, partly due to the following classical problem: if we consider the Taylor series $f(z)=\sum_{n=0}^{\infty}a_nz^n$, what properties of the sequence $(a_n)_{n=0}^{\infty}$ imply that $f$ has its radius of convergence as a natural boundary, that is all of the points on its radius of convergence are singular? It turned out that random power series form a large family of such functions: it was proven in \cite{S} that if $f$ has a finite radius of convergence and $(\mathcal{A}_n)_{n=0}^{\infty}$ are independent, identically distributed random variables with uniform distribution on $\{|z|=1\}$, then for almost every choice, $f$ has a natural boundary on the radius of convergence. Later, somewhat stronger and more specific results were obtained, even in the recent years (see e.g. \cite{BS}).

These theorems showed that random power series in the complex plane tend to behave rather chaotically near the radius of convergence. In this paper, we investigate a similar question on the real line, motivated by a problem raised in \cite{KPP}. Although the results are somewhat natural and are easy to formulate, we did not manage to find them in the literature.

Let $D=\{d_1,\ldots,d_k\}$ be a finite set of real numbers. Then we may consider the random power series with coefficients from $D$, i.e.
\begin{displaymath}
f(x)=\sum_{n=1}^{\infty} a_nx^n,
\end{displaymath}
where each $a_n$ equals $d_j$ (for $1\leq j\leq k$) with probability $1/k$, independently in $n$. To exclude trivialities, assume from now on that $k\geq 2$.\footnote{By a slight change of notation, we start the power series with the order $1$ term, in order to index the random variables by $\NN$.}

To make this more rigorous, define for each $n\in\NN$, the probability space $(D,\mathcal{A}_n,\p_n)$, where $D$ is the fixed set above, $\mathcal{A}_n$ is the discrete topology on $D$, and for each $D'\subseteq D$,
\begin{displaymath}
\p_n(D')=\# D'/ \# D = \# D'/k
\end{displaymath}
with $\#$ standing for the cardinality.

Then set $(\Omega,\mathcal{A},\p)$ for the product probability space, i.e. $\Omega=\prod_{n\in \NN} D$, $\mathcal{A}$ is the set of Borel sets of $\Omega$ (in the product topology $\prod_{n\in\NN} \mathcal{A}_n$), $\p=\prod_{n\in\NN}\p_n$.

We will denote a general element of $\Omega$ by $(a_n)$, and by $a_n$ its $n$th coordinate (i.e. $a_n\in D$, $(a_n)\in\Omega$). To any $(a_n)\in \Omega$, we may associate the power series
\begin{displaymath}
f_{(a_n)}(x)=\sum_{n=1}^{\infty} a_nx^n.
\end{displaymath}
In most cases below, there will be a single sequence $(a_n)$ and a resulting power series $f_{(a_n)}$, therefore we write simply $f$ in place of $f_{(a_n)}$. Of course, when there is any chance for confusion, we return to the longer (and less loose) notation.

It is easy to see that the convergence radius of $f(x)$ is $1$ for almost all coefficient sequences $(a_n)$ (except for the trivial case $D=\{0\}$ which is already excluded by our assumption $k\geq 2$). In this paper, we investigate the behaviour of $f$, as $x$ tends to $1$ from below. It will turn out that the most important properties are the following:
\begin{equation}\label{eq_1}
\lim_{x\to 1-} f(x)=\infty,
\end{equation}
\begin{equation}\label{eq_2}
\lim_{x\to 1-} f(x)=-\infty,
\end{equation}
\begin{equation}\label{eq_3}
\limsup_{x\to 1-} f(x)=\infty\qquad \text{and} \qquad \liminf_{x\to 1-} f(x)=-\infty.
\end{equation}

Our first result is that one of these properties hold for almost all sequences.

\begin{proposition}\label{proposition}
We have
\begin{displaymath}
\p(\text{$f$ satisfies \eqref{eq_1} or \eqref{eq_2} or \eqref{eq_3}})=1.
\end{displaymath}
\end{proposition}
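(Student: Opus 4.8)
The plan is to reduce, by a zero--one law, to deciding whether the upper and lower limits at $1-$ are infinite, and then to split on the sign of the mean $\mu=\e(a_1)=\tfrac1k\sum_j d_j$. Write $U=\limsup_{x\to1-}f(x)$ and $L=\liminf_{x\to1-}f(x)$, so that \eqref{eq_1}, \eqref{eq_2}, \eqref{eq_3} say $U=L=\infty$, $U=L=-\infty$, and $U=\infty,\ L=-\infty$ respectively. Changing finitely many coordinates $a_1,\dots,a_N$ alters $f$ by a polynomial that is bounded on $[0,1]$, hence changes $U$ and $L$ by a bounded amount and cannot affect whether they equal $+\infty$, $-\infty$, or a finite real. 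Thus each of $\{U=\infty\}$, $\{U=-\infty\}$, $\{U\in\RR\}$ and their $L$-analogues is a tail event, so by Kolmogorov's zero--one law has probability $0$ or $1$. Since always $L\le U$, it suffices to prove $\p(U\in\RR)=0$ and $\p(L\in\RR)=0$: then $U,L\in\{\pm\infty\}$ almost surely, and the combinations allowed by $L\le U$ are exactly \eqref{eq_1}, \eqref{eq_2}, \eqref{eq_3}.

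For $\mu\ne0$ I would use Abel summation. With $S_n=\sum_{m=1}^n a_m$ one has $f(x)=(1-x)\sum_{n\ge1}S_nx^n$, and the standard regularity property of Abel means gives
\begin{displaymath}
\liminf_{n\to\infty}S_n\ \le\ L\ \le\ U\ \le\ \limsup_{n\to\infty}S_n.
\end{displaymath}
By the strong law, $\mu>0$ forces $S_n\to\infty$, hence $U=L=\infty$, i.e.\ \eqref{eq_1}; and $\mu<0$ forces $S_n\to-\infty$, hence $U=L=-\infty$, i.e.\ \eqref{eq_2}. In particular $U,L\in\{\pm\infty\}$ in these cases.

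The case $\mu=0$ is the heart of the matter, since there the sandwich is vacuous (both outer terms are infinite), and it must be handled probabilistically. First, $U=\infty$ holds exactly when $\sup_{x\in(0,1)}f(x)=\infty$, because $f$ is continuous on $(0,1)$ with $f(0+)=0$, so it is bounded on every $(0,1-\delta]$ and can blow up only as $x\to1-$. Suppose for contradiction that $\p(U\in\RR)=1$. Then $\sup_{(0,1)}f<\infty$ almost surely, so $\p(\sup_{(0,1)}f\le C)\to1$ as $C\to\infty$, and we may fix $C$ with $\p(\sup_{(0,1)}f\le C)\ge\tfrac34$; consequently $\p(f(x)>C)\le\tfrac14$ for every $x$. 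But for fixed $x$, $f(x)=\sum_{n\ge1}a_nx^n$ is a sum of independent, uniformly bounded, centred variables with $\operatorname{Var}f(x)=\sigma^2x^2/(1-x^2)\to\infty$ (here $\sigma^2=\operatorname{Var}(a_1)>0$ as $k\ge2$). The Lindeberg condition holds trivially, the summands being bounded while the variance diverges, so $f(x)/\sqrt{\operatorname{Var}f(x)}$ tends in law to $N(0,1)$ and $\p(f(x)>C)\to\tfrac12$ as $x\to1-$, contradicting $\p(f(x)>C)\le\tfrac14$. Hence $\p(U=\infty)>0$, so $\p(U=\infty)=1$ by the zero--one law. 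The same argument applied to $-f$ (coefficients uniform on $-D$, still centred) gives $\p(L=-\infty)=1$, so \eqref{eq_3} holds almost surely.

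I expect the $\mu=0$ regime to be the main obstacle: the genuine task is to exclude a \emph{finite} limsup or liminf, about which the partial-sum comparison is silent, and it is the concentration/central-limit estimate played against the tail zero--one law that forces the oscillations all the way to $\pm\infty$. The only routine points to check are the Abel-mean sandwich inequality and the identity $\{U=\infty\}=\{\sup_{(0,1)}f=\infty\}$, both elementary.
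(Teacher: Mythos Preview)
Your argument is correct and takes a genuinely different route from the paper. One small wording slip: in the $\mu=0$ case you write ``Suppose for contradiction that $\p(U\in\RR)=1$'' and then conclude ``Hence $\p(U=\infty)>0$''. What your CLT contradiction actually rules out is $\p(\sup_{(0,1)}f<\infty)=1$, i.e.\ $\p(U=\infty)=0$; that hypothesis covers both $\p(U\in\RR)=1$ and $\p(U=-\infty)=1$, and its negation is exactly $\p(U=\infty)>0$. With that rephrasing the logic is clean.

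\medskip

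\textbf{Comparison with the paper.} The paper does \emph{not} split on the sign of $\mu$. Instead it considers the push-forward measures $\mu_\pm(B)=\p(\limsup/\liminf\in B)$ on $\RR$ and proves they are invariant under translation by $d_2-d_1$: a combinatorial lemma furnishes near-bijections $g_N^\sharp$ of $D^N$ (on a set of density $1-o(1)$) that increase $\sum_{n\le N}a_n$ by exactly $d_2-d_1$, and extending these to $\Omega$ by the identity on coordinates $n>N$ shifts $U$ and $L$ by $d_2-d_1$ while (almost) preserving $\p$. A nontrivially translation-invariant finite Borel measure on $\RR$ is zero, so $\p(U\in\RR)=\p(L\in\RR)=0$. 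This argument is uniform in $\mu$, needs no SLLN or CLT, and (as the paper notes in its concluding remarks) adapts to non-uniform product measures with only a monotonicity tweak.

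Your approach, by contrast, dispatches the two regimes with standard probabilistic tools: Abel summation plus SLLN for $\mu\neq0$, and the Lindeberg CLT against the zero--one law for $\mu=0$. The payoff is that you actually prove the stronger Theorems~\ref{theorem_1} and~\ref{theorem_2} in the same breath, whereas the paper uses Proposition~\ref{proposition} as a stepping stone and then supplies separate arguments for those theorems. The cost is the case split and the use of CLT machinery where the paper gets by with an elementary invariance principle.
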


Moreover, we will identify which one of the three properties holds almost surely. We formulate this in two statements, depending on whether the expected value of a single coefficient vanishes or not.

\begin{theorem}\label{theorem_1} If $\sum_{d\in D} d> 0$, then
\begin{displaymath}
\p(\text{$f$ satisfies \eqref{eq_1}})=1.
\end{displaymath}
If $\sum_{d\in D} d < 0$, then
\begin{displaymath}
\p(\text{$f$ satisfies \eqref{eq_2}})=1.
\end{displaymath}
\end{theorem}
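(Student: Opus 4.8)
The plan is to reduce the boundary behaviour of $f$ to the growth of the partial sums $S_N=\sum_{n=1}^N a_n$ via Abel summation, and then to control these partial sums with the strong law of large numbers. To set up, write $\mu=\frac1k\sum_{d\in D}d$, which is the common expected value of every coordinate $a_n$; by hypothesis $\mu>0$. For fixed $0<x<1$ the series defining $f(x)$ converges absolutely, since the coefficients lie in the finite set $D$ and $x<1$, so summation by parts is legitimate. Because $|S_N|\leq N\max_j|d_j|$ forces $S_Nx^N\to 0$ as $N\to\infty$, I obtain the identity
\begin{equation*}
f(x)=(1-x)\sum_{n=1}^{\infty}S_nx^n\qquad(0<x<1).
\end{equation*}

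Next, the $a_n$ are independent and identically distributed with finite mean $\mu$, so the strong law of large numbers gives $S_n/n\to\mu$ with probability $1$. It therefore suffices to prove the following deterministic implication on the full-measure event where this convergence holds: if a real sequence satisfies $S_n/n\to\mu>0$, then $(1-x)\sum_n S_n x^n\to\infty$ as $x\to 1-$. Fix $\eps$ with $0<\eps<\mu$ and choose $N_0$ so that $S_n\geq(\mu-\eps)n$ for all $n\geq N_0$. Splitting the series at $N_0$, the finite head $(1-x)\sum_{n<N_0}S_nx^n$ is bounded in absolute value by $(1-x)\sum_{n<N_0}|S_n|$, which tends to $0$; the tail is at least $(\mu-\eps)(1-x)\sum_{n\geq N_0}nx^n$. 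Since $(1-x)\sum_{n=1}^{\infty}nx^n=x/(1-x)\to\infty$ and the finitely many omitted terms again vanish after multiplication by $(1-x)$, the tail tends to $+\infty$. Hence $\liminf_{x\to 1-}f(x)=\infty$, that is, \eqref{eq_1} holds almost surely. The case $\sum_{d\in D}d<0$ follows at once by applying the same argument to $-f$ (equivalently, replacing $D$ by $-D$), yielding \eqref{eq_2}.

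The only genuinely delicate point is the passage through the Abel summation identity: one must verify both that $S_Nx^N\to 0$ (so that the boundary term in the summation by parts drops out) and that the head together with the finitely many omitted tail terms really become negligible after multiplication by $(1-x)$. It is also worth emphasising that the pointwise almost sure convergence $S_n/n\to\mu$, rather than a mere statement about expectations, is exactly what feeds the estimate and lets me work on a fixed good sequence. Beyond this, the argument is routine, and no probabilistic input finer than the strong law of large numbers is required; the sharper fluctuation information needed for the borderline case $\mu=0$ in \eqref{eq_3} will have to come from a separate, more careful analysis.
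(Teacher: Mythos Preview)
Your argument is correct. Both you and the paper start from the strong law of large numbers and pass to $f$ through Abel (partial) summation, but from that point on the two proofs diverge.

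The paper extracts from the SLLN only the qualitative fact that $\sum a_n=\infty$ almost surely, and from this deduces that with \emph{positive} probability the partial sums stay above a fixed level $m\cdot\min D$ for all $l$; partial summation then keeps $f(x)$ bounded below on that event. This by itself says only that \eqref{eq_2} and \eqref{eq_3} fail with positive probability, so the paper must feed this into Proposition~\ref{proposition} (whose proof occupies an entire section, via the combinatorial Lemma~\ref{lemma_2}) to get $\p(\text{\eqref{eq_1}})>0$, and then invoke Kolmogorov's $0$--$1$ law (Lemma~\ref{lemma_5}) to upgrade to probability $1$.

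You instead use the full quantitative content of the SLLN, namely $S_n\geq(\mu-\eps)n$ eventually, and show \emph{directly} that on this probability-$1$ event one has $f(x)\to\infty$. This bypasses Proposition~\ref{proposition} and the $0$--$1$ law entirely, making the proof of Theorem~\ref{theorem_1} self-contained and strictly shorter. The paper's route is more modular---it reuses machinery built for the other theorems---whereas yours is the more economical stand-alone argument.
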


\begin{theorem}\label{theorem_2}
 If $\sum_{d\in D} d = 0$, then
\begin{displaymath}
\p(\text{$f$ satisfies \eqref{eq_3}})=1.
\end{displaymath}
\end{theorem}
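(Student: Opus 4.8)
The plan is to combine Proposition~\ref{proposition} with a zero--one law, which reduces the claim to excluding the two divergent behaviours \eqref{eq_1} and \eqref{eq_2}. First I would observe that each of the three events \eqref{eq_1}, \eqref{eq_2}, \eqref{eq_3} lies in the tail $\sigma$-algebra of the independent coordinates $(a_n)$: altering finitely many coefficients $a_1,\dots,a_N$ changes $f(x)$ by a polynomial $\sum_{n=1}^{N}c_nx^n$, which stays bounded on $[0,1)$, and adding a bounded function affects neither $\lim_{x\to1-}f$ nor $\limsup_{x\to1-}f$ nor $\liminf_{x\to1-}f$. Hence Kolmogorov's zero--one law applies, and each of the probabilities $\p(\text{$f$ satisfies \eqref{eq_1}})$, $\p(\text{$f$ satisfies \eqref{eq_2}})$, $\p(\text{$f$ satisfies \eqref{eq_3}})$ equals $0$ or $1$. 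Since the three events are pairwise disjoint and, by Proposition~\ref{proposition}, cover $\Omega$ up to a null set, exactly one of them has probability $1$; it therefore suffices to show that \eqref{eq_1} and \eqref{eq_2} each fail almost surely.

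The main tool is a central limit theorem for $f(x)$ at a fixed $x$. Writing $\sigma^2=\frac1k\sum_{d\in D}d^2>0$ for the common variance of the coefficients and using $\e[a_n]=\frac1k\sum_{d\in D}d=0$, the random variable $f(x)=\sum_{n\ge1}a_nx^n$ has mean $0$ and variance
\begin{displaymath}
V(x)=\sigma^2\sum_{n\ge1}x^{2n}=\sigma^2\frac{x^2}{1-x^2},
\end{displaymath}
which tends to $\infty$ as $x\to1-$. Fixing any sequence $x_m\to1-$ and normalising, the summands $a_nx_m^n/\sqrt{V(x_m)}$ are independent, centred, and uniformly bounded by $C/\sqrt{V(x_m)}\to0$ with $C=\max_{d\in D}|d|$, so the Lindeberg condition holds trivially: for $m$ large every truncation indicator vanishes, as each summand is smaller than any fixed threshold. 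The Lindeberg--Feller theorem (in its form for triangular arrays whose rows are infinite but have summable variances) then yields $f(x_m)/\sqrt{V(x_m)}\Rightarrow N(0,1)$, whence $\p(f(x_m)<0)\to\tfrac12$ and $\p(f(x_m)>0)\to\tfrac12$.

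To rule out \eqref{eq_1}, suppose for contradiction that $\lim_{x\to1-}f(x)=\infty$ holds almost surely. Then for almost every $(a_n)$ there is some $x_0<1$ with $f(x)>0$ for all $x\in(x_0,1)$, so $\1[f(x_m)<0]\to0$ pointwise as $m\to\infty$; by dominated convergence $\p(f(x_m)<0)\to0$, contradicting the limit $\tfrac12$ obtained above. The same reasoning applied to $\1[f(x_m)>0]$ excludes \eqref{eq_2}. Thus neither divergent event can hold with probability $1$; invoking the zero--one law of the first paragraph upgrades this to $\p(\text{$f$ satisfies \eqref{eq_1}})=\p(\text{$f$ satisfies \eqref{eq_2}})=0$, and Proposition~\ref{proposition} then forces $\p(\text{$f$ satisfies \eqref{eq_3}})=1$.

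The step I expect to require the most care is the passage between the two regimes: the almost-sure \emph{boundary} behaviour of a single realisation $f$, and the \emph{fixed-}$x$ distribution of $f(x)$. The central limit theorem only delivers a statement about $f(x_m)$ for a fixed index $m$, and the contradiction argument only shows that the divergent events have probability strictly less than $1$; it is the tail-triviality of these events that promotes ``probability $<1$'' to ``probability $0$'', so the zero--one law is genuinely indispensable here rather than a convenience. I would also note that a full central limit theorem is more than strictly needed: it would be enough to establish that both $\p(f(x_m)<0)$ and $\p(f(x_m)>0)$ stay bounded away from $0$ along some sequence $x_m\to1-$, which could alternatively be extracted from a second-moment (Paley--Zygmund type) estimate, but the Lindeberg argument is the cleanest route given that the summands are bounded and the variance blows up.
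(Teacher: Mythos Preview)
Your argument is correct, and it takes a genuinely different route from the paper. The paper exploits the assumption $\sum_{d\in D}d=0$ combinatorially: it defines the cyclic permutation $p$ of $D$ and its coordinatewise lift $\pp$ on $\Omega$, observes that $\sum_{j=0}^{k-1}f_{\pp^j((a_n))}(x)\equiv 0$, and concludes that at every $x$ some orbit element is nonpositive and some is nonnegative; since $\pp$ is $\p$-preserving this yields $\p(\text{$f$ satisfies \eqref{eq_1}})\leq 1-1/k$ (and likewise for \eqref{eq_2}), after which the zero--one law and Proposition~\ref{proposition} finish exactly as in your outline. You instead extract the inequality $\p(\text{$f$ satisfies \eqref{eq_1}})<1$ from the distributional fact $\p(f(x_m)<0)\to 1/2$, obtained via a Lindeberg--Feller CLT for the bounded, centred summands $a_nx_m^n$.

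What each approach buys: the paper's permutation argument is completely elementary (no CLT, no limit theorems beyond Proposition~\ref{proposition}), but it leans essentially on the \emph{uniform} distribution on $D$ --- the map $\pp$ is measure-preserving precisely because each digit has probability $1/k$. Your CLT argument needs only $\e[a_n]=0$ and $\mathrm{Var}(a_n)>0$ and is indifferent to uniformity; in particular it goes through verbatim for the non-uniform setting $\p_n(\{d_j\})=p_j$ with $\sum_j p_jd_j=0$, which the paper's concluding section explicitly leaves open. The one place to be careful is your appeal to Lindeberg--Feller for rows of infinite length; this is routine (truncate each row so that the tail variance is $o(1)$, or compute characteristic functions directly using $\sum_n x_m^{2n}/V(x_m)\cdot\sigma^2=1$ and $\max_n x_m^{n}/\sqrt{V(x_m)}\to 0$), and as you note a cruder Paley--Zygmund bound on $\p(f(x_m)<0)$ would already suffice.
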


In Section~\ref{sec:residuality}, we investigate the same properties of generic power series in the Baire categorial sense (see \cite[pp. 40-41]{O}). The corresponding statements are summarized as follows.

\begin{theorem}\label{theorem_3} If each element of $D$ is nonnegative (resp. nonpositive), then
\begin{displaymath}
\{(a_n)\in\Omega: \text{$f$ satisfies \eqref{eq_1}}\}\qquad \text{(resp. $\{(a_n)\in\Omega: \text{$f$ satisfies \eqref{eq_2}}\}$)}
\end{displaymath}
is residual.

If $D$ contains positive and negative elements simultaneously, then
\begin{displaymath}
\{(a_n)\in\Omega: \text{$f$ satisfies \eqref{eq_3}}\}
\end{displaymath}
is residual.
\end{theorem}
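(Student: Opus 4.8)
The plan is to work entirely inside the topological space $\Omega=\prod_{n\in\NN}D$, which, being a countable product of finite discrete spaces, is a compact (hence complete) metrizable space, so the Baire category theorem applies and \emph{residual} means \emph{contains a dense $G_\delta$}. The basic building block in every case is the evaluation map $F_x\colon\Omega\to\RR$, $F_x((a_n))=\sum_{n\geq 1}a_nx^n$, for a fixed $x\in(0,1)$. I would first record that $F_x$ is continuous: if two sequences agree on their first $N$ coordinates, their $F_x$-values differ by at most $2(\max_{d\in D}|d|)\,x^{N+1}/(1-x)$, which tends to $0$. Consequently $\{F_x>M\}$ is open for every $M$, and this is essentially the only analytic input needed.

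For the first (nonnegative) case, I would avoid the limit directly and instead exploit monotonicity: when every $d\in D$ is nonnegative, $x\mapsto f(x)$ is nondecreasing on $[0,1)$, so $\lim_{x\to1-}f(x)=\sum_n a_n\in[0,\infty]$ by monotone convergence. Hence \eqref{eq_1} holds precisely when $\sum_n a_n=\infty$, which (the positive elements of $D$ being bounded away from $0$) is exactly the event that $a_n$ lies in $P=\{d\in D:d>0\}$ for infinitely many $n$. This event is $\bigcap_{N}\bigcup_{n\geq N}\{a_n\in P\}$, a $G_\delta$, and it is dense because any cylinder can be extended by placing infinitely many coordinates in the (nonempty) set $P$. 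The nonpositive case is identical after replacing $f$ by $-f$.

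The third case is the substantial one. Here I would set $B=\{\limsup_{x\to1-}f=\infty\}$ and $C=\{\liminf_{x\to1-}f=-\infty\}$ and prove each is a dense $G_\delta$; their intersection is then residual, since a finite intersection of residual sets is residual. Writing $B=\bigcap_{M,m\in\NN}B_{M,m}$ with $B_{M,m}=\{(a_n):\exists x\in(1-1/m,1),\ f(x)>M\}$, continuity of $x\mapsto f(x)$ lets me restrict $x$ to the rationals, so $B_{M,m}=\bigcup_{x\in\QQ\cap(1-1/m,1)}\{F_x>M\}$ is open and $B$ is $G_\delta$. For density of $B_{M,m}$, given a cylinder fixing coordinates $a_1,\dots,a_{N_0}$, I would fill the remaining coordinates with a fixed positive element $p=\max D>0$; then $f(x)=\sum_{i\leq N_0}a_ix^i+p\,x^{N_0+1}/(1-x)\to+\infty$ as $x\to1-$, so some $x_0\in(1-1/m,1)$ witnesses $f(x_0)>M$ and the constructed sequence lies in $B_{M,m}$. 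Symmetrically, filling with $q=\min D<0$ handles $C$.

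The main obstacle is bookkeeping rather than a single hard idea: one must be sure that each $B_{M,m}$ is genuinely \emph{open} (so that $B$ is $G_\delta$ and not merely some larger Borel class), which is exactly why the statement is phrased through $\limsup$/$\liminf$ — the relevant sets are then unions of the open sets $\{F_x>M\}$ — whereas a naive attempt to treat the plain limit in \eqref{eq_1}/\eqref{eq_2} leads to $\forall x$ quantifiers and only $G_\delta$ (not open) pieces. The monotonicity shortcut in the first two cases is precisely what sidesteps that difficulty, and the "fill the tail with a single sign" construction is what makes every relevant set dense.
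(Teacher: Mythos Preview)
Your proof is correct and very close to the paper's. In the nonnegative case you and the paper make the same reduction (the bad set consists of the eventually-zero sequences), just phrased dually: you exhibit $\{a_n\in P\text{ i.o.}\}$ as a dense $G_\delta$, while the paper shows each $E_m=\{a_n=0\text{ for }n\geq m\}$ is nowhere dense. In the mixed-sign case the paper first observes that $\limsup_{x\to1-}f=\infty$ is equivalent to $\sup_{(0,1)}f=\infty$, which lets it work with the single-index family $F_m=\{\sup f\leq m\}$ and show each is nowhere dense by producing an \emph{open} sub-cylinder $W$ (tail set to $\max D$ out to some finite $N$, with the remainder controlled at one fixed $x$) disjoint from $F_m$; you instead keep the $\limsup$, carry the extra index $m$ in your $B_{M,m}$, and witness density with a single point rather than an open set. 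Both routes work; the paper's $\sup$-shortcut buys one fewer quantifier, yours avoids the extra tail estimate. One minor remark: restricting to rational $x$ is unnecessary for the openness of $B_{M,m}$, since an arbitrary union of the open sets $\{F_x>M\}$ is already open.
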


Now Theorem~\ref{theorem_2} and Theorem~\ref{theorem_3} have the following simple consequence via Bolzano's theorem on continuous functions, answering a question in \cite{KPP}.

\begin{corollary}\label{corollary}
If $\# D\geq 2$, and $\sum_{d\in D} d = 0$, then for almost all and residually many sequences $(a_n)\in\Omega$, the following holds. For any real number $y$, there are infinitely many numbers $0<x<1$ satisfying
\begin{displaymath}
y=\sum_{n=1}^{\infty} a_nx^n.
\end{displaymath}
\end{corollary}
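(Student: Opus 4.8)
The plan is to deduce the corollary directly from Theorem~\ref{theorem_2} and Theorem~\ref{theorem_3} by combining them with the continuity of $f$ and the intermediate value theorem. First I would fix a coefficient sequence $(a_n)$ for which $f$ satisfies \eqref{eq_3}; by Theorem~\ref{theorem_2} this holds for almost all sequences, and when $D$ contains both positive and negative elements (which is forced by $\#D\geq 2$ together with $\sum_{d\in D}d=0$), Theorem~\ref{theorem_3} guarantees that the same set of sequences is residual. Thus it suffices to show that \eqref{eq_3} alone implies the conclusion for a single fixed $f$, and the probabilistic and categorial statements then follow simultaneously.

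\medskip

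So fix such an $f$. On the open interval $(0,1)$ the function $f$ is continuous, being the sum of a power series with radius of convergence $1$. Now fix an arbitrary $y\in\RR$. Since $\limsup_{x\to 1-}f(x)=\infty$ and $\liminf_{x\to 1-}f(x)=-\infty$, for any $\delta\in(0,1)$ we can find points $x',x''\in(\delta,1)$ with $f(x')>y$ and $f(x'')<y$. Applying Bolzano's intermediate value theorem to $f$ on the closed interval with endpoints $x'$ and $x''$ yields a point $x\in(\delta,1)$ with $f(x)=y$.

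\medskip

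To upgrade this to \emph{infinitely many} solutions, I would iterate the above argument while shrinking the interval toward $1$. Starting from $\delta_1\in(0,1)$, I obtain a solution $x_1\in(\delta_1,1)$; then setting $\delta_2=x_1$ (or any value strictly between $x_1$ and $1$) and repeating, I obtain $x_2\in(\delta_2,1)=(x_1,1)$, and inductively a strictly increasing sequence $x_1<x_2<\cdots$ of distinct points in $(0,1)$ each satisfying $f(x_m)=y$. This produces infinitely many distinct solutions, completing the proof.

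\medskip

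I do not expect any real obstacle here: the entire content is packaged into Theorem~\ref{theorem_2} and Theorem~\ref{theorem_3}, and the remaining step is an elementary application of continuity and the intermediate value theorem. The only point requiring a moment's care is verifying that the hypotheses of Theorem~\ref{theorem_3} are met in the zero-expectation case, namely that $\#D\geq 2$ and $\sum_{d\in D}d=0$ force $D$ to contain both a positive and a negative element (it cannot be that all elements are nonnegative unless they all vanish, contradicting $\#D\geq 2$, and symmetrically for nonpositive); this is what allows the residuality conclusion to align with the almost-sure one.
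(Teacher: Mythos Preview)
Your proof is correct and follows exactly the route indicated by the paper, which simply remarks that the corollary is a consequence of Theorem~\ref{theorem_2} and Theorem~\ref{theorem_3} via Bolzano's theorem on continuous functions. Your additional observation that $\#D\geq 2$ together with $\sum_{d\in D}d=0$ forces $D$ to contain both positive and negative elements (so that the relevant case of Theorem~\ref{theorem_3} applies) is precisely the small check needed to make the residuality half go through.
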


For the sake of completeness, before starting the main investigations of the paper, we make it clear that properties \eqref{eq_1}-\eqref{eq_3} indeed define $\p$-measurable sets, that is, it makes sense to speak about the probabilities in Proposition~\ref{proposition} and Theorems~\ref{theorem_1}-\ref{theorem_2}. The argument in the proof of Lemma~\ref{lemma_1} is highly standard, so the experienced reader may skip it.

\begin{lemma}\label{lemma_1} For any $a\in\RR$,
\begin{displaymath}
\begin{split}
& \left\{(a_n)\in\Omega:\limsup_{x\to 1-} \sum_{n=1}^{\infty}a_nx^n > a\right\}\in\mathcal{A}, \left\{(a_n)\in\Omega:\liminf_{x\to 1-} \sum_{n=1}^{\infty}a_nx^n > a\right\}\in\mathcal{A}, \\
& \left\{(a_n)\in\Omega:\limsup_{x\to 1-} \sum_{n=1}^{\infty}a_nx^n < a\right\}\in\mathcal{A}, \left\{(a_n)\in\Omega:\liminf_{x\to 1-} \sum_{n=1}^{\infty}a_nx^n < a\right\}\in\mathcal{A}.
\end{split}
\end{displaymath}
\end{lemma}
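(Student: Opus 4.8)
The plan is to reduce everything to the measurability of the single "evaluation" maps $(a_n)\mapsto f_{(a_n)}(x)$ at rational points $x$, and then to express each of the four threshold sets as a countable Boolean combination of sets on which such an evaluation exceeds (or falls below) a rational number. The first observation is that, since $D$ is finite, we have $|a_n|\leq M:=\max_{d\in D}|d|$ for every sequence, so $|a_nx^n|\leq Mx^n$ and the series converges absolutely and locally uniformly on $[0,1)$ for \emph{every} $(a_n)\in\Omega$. Hence $x\mapsto f_{(a_n)}(x)$ is continuous on $[0,1)$ for every sequence, not merely almost surely; this uniform continuity control is what makes the reduction to a countable dense set of arguments legitimate. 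Next, for each fixed $x\in(0,1)$ the map $(a_n)\mapsto \sum_{n=1}^{N}a_nx^n$ depends on only finitely many coordinates and is therefore continuous (a fortiori Borel) on $\Omega$; the full evaluation $(a_n)\mapsto f_{(a_n)}(x)$ is the everywhere-defined pointwise limit of these partial sums, hence $\mathcal{A}$-measurable. Consequently, for each $x\in(0,1)\cap\QQ$ and each $q\in\QQ$, the basic sets $\{(a_n):f_{(a_n)}(x)>q\}$ and $\{(a_n):f_{(a_n)}(x)\geq q\}$ lie in $\mathcal{A}$.

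The next step is to rewrite the $\limsup$ and $\liminf$ conditions in terms of these basic sets. Writing $\limsup_{x\to1-}f=\inf_{\delta>0}\sup_{x\in(1-\delta,1)}f(x)$ and using that the inner supremum over an interval equals the supremum over its rational points (by continuity and density), one obtains the identity
\begin{displaymath}
\left\{(a_n):\limsup_{x\to1-} f(x)>a\right\}
=\bigcup_{\substack{q\in\QQ\\ q>a}}\ \bigcap_{N\in\NN}\ \bigcup_{\substack{x\in\QQ\\ 1-1/N<x<1}}\ \bigl\{(a_n):f_{(a_n)}(x)>q\bigr\}.
\end{displaymath}
Since this is a countable union of countable intersections of countable unions of basic measurable sets, the left-hand side belongs to $\mathcal{A}$. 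The analogous identity for the $\liminf$ set with threshold $>a$ uses $\liminf_{x\to1-}f=\sup_{\delta>0}\inf_{x\in(1-\delta,1)}f(x)$ together with the fact that $\inf$ of a continuous function over an interval equals its $\inf$ over the rational points, which replaces the outer $\bigcap_N\bigcup_x$ pattern by $\bigcup_N\bigcap_x$ and the strict inequality $f(x)>q$ by the closed condition $f(x)\geq q$.

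The two remaining sets, with threshold $<a$, follow by the same scheme: $\limsup<a$ is captured by $\bigcup_{q<a}\bigcup_N\bigcap_x\{f_{(a_n)}(x)\leq q\}$ and $\liminf<a$ by $\bigcup_{q<a}\bigcap_N\bigcup_x\{f_{(a_n)}(x)<q\}$, again over rational $q$, natural $N$, and rational $x\in(1-1/N,1)$. The only point demanding genuine care is the bookkeeping of strict versus non-strict inequalities when translating $\inf/\sup$ comparisons into the countable combinations: a supremum exceeds a value exactly when some point does, giving an open ($>$) condition amenable to a union, whereas an infimum dominates a value exactly when every point does, giving a closed ($\geq$) condition amenable to an intersection, and one must verify in each of the four cases that the reduction to rational $x$ preserves equivalence. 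All four verifications are routine given the continuity established at the outset, so each of the four sets lies in $\mathcal{A}$.
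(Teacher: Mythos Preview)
Your proof is correct and follows essentially the same standard route as the paper: show that the evaluation map $(a_n)\mapsto f_{(a_n)}(x)$ is $\mathcal{A}$-measurable for each fixed $x$, use continuity of $x\mapsto f_{(a_n)}(x)$ to restrict to rational arguments, and express the $\limsup/\liminf$ threshold sets as countable Boolean combinations of the basic sets. The only cosmetic differences are that the paper proves $g_x$ is actually continuous on $\Omega$ (via a tail estimate) rather than merely a pointwise limit of cylinder functions, and it packages the inner step through an auxiliary function $g_{y,z}((a_n))=\max_{y\leq x\leq z}g_x((a_n))$ on compact subintervals instead of taking the union over rational $x\in(1-1/N,1)$ directly; neither change affects the substance of the argument.
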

\begin{proof}
We prove only the first statement, the remaining three ones follow similarly. Set $B_c=(c,\infty)$ for any $c\in\RR$.

First fix any $0\leq x<1$, and consider $g_x:\Omega\to\RR$ defined as $g_x((a_n))=\sum_{n=1}^{\infty} a_nx^n$. It is easy to see that $g_x$ is continuous: if $(a_n)$ and $\eps>0$ are given, then choose $N\in\NN$ such that $\max\{|d_1|,\ldots,|d_k|\}x^N/(1-x)<\eps/2$; we see that if we modify $(a_n)$ only in coordinates $n>N$, then $g_x((a_n))$ changes by less than $\eps$. Therefore, $g_x^{-1}(B_c)\in\mathcal{A}$ for any $c\in\RR$.

Now fix any $0\leq y<z<1$, and consider $g_{y,z}:\Omega\to\RR$ defined as $g_{y,z}((a_n))=\max_{y \leq x \leq z} g_x((a_n))$ (this maximum exists, as $\sum_{n=1}^{\infty}a_nx^n$ is continuous in $x\in[y,z]$). Using once again the continuity of $\sum_{n=1}^{\infty}a_nx^n$ in $x\in[y,z]$, we see, for any $c\in\RR$,
\begin{displaymath}
g_{y,z}^{-1}(B_c)=\bigcup_{x\in [y,z]\cap\QQ} g_x^{-1}(B_c) \in \mathcal{A},
\end{displaymath}
since each $g_{x}^{-1}(B_c)\in\mathcal{A}$.

Finally, observe that
\begin{displaymath}
\left\{(a_n)\in\Omega:\limsup_{x\to 1-} \sum_{n=1}^{\infty}a_nx^n > a\right\}= \bigcup_{j=1}^{\infty} \bigcap_{m=1}^{\infty} \bigcup_{l=m+1}^{\infty} g_{1-1/m,1-1/l}^{-1}(B_{a+1/j})\in\mathcal{A},
\end{displaymath}
since each $g_{1-1/m,1-1/l}^{-1}(B_{a+1/j})\in\mathcal{A}$.
\end{proof}

This lemma shows that the functions $\limsup_{x\to 1-} f(x)$ and $\liminf_{x\to 1-} f(x)$ are random variables. From this, it is clear that the properties \eqref{eq_1}-\eqref{eq_3} give rise to $\p$-measurable sets, e.g.
\begin{displaymath}
\left\{(a_n)\in\Omega:\text{$f$ satisfies \eqref{eq_1}}\right\}=\bigcap_{N=1}^{\infty}\left\{(a_n)\in\Omega:\liminf_{x\to 1-} \sum_{n=1}^{\infty}a_nx^n>N\right\}.
\end{displaymath}

\section{Extreme behaviour}

The goal of this section is to prove Proposition~\ref{proposition}, following the guiding principle that as $x$ tends to $1$ from below, our power series gets less and less sensitive to what its first few coefficients are. First of all, define the following Borel measures on $\RR$ (to see that they are Borel measures, recall Lemma~\ref{lemma_1}):
\begin{displaymath}
\mu_+(B)=\p\left(\limsup_{x\to 1-} f(x) \in B\right),\qquad \mu_-(B)=\p\left(\liminf_{x\to 1-} f(x) \in B\right).
\end{displaymath}
In other words, these are the distributions of $\limsup_{x\to 1-} f(x)$ and $\liminf_{x\to 1-} f(x)$, in particular, both of them are finite. One may easily see that Proposition~\ref{proposition} is equivalent to the fact that both $\mu_+$ and $\mu_-$ are the constant $0$ measures on $\RR$.

We start with a concept of combinatorial nature. For any $N\in \NN$, define the function $g_N^{\sharp}$ between two subsets of $D^N$ satisfying the following conditions:
\begin{enumerate}[(i)]
\item $g_N^{\sharp}$ is a bijection between its domain and range;
\item if $g_N^{\sharp}((a_1,\ldots,a_N))=(b_1,\ldots,b_N)$, then
\begin{displaymath}
\sum_{n=1}^N b_n = (d_2-d_1) + \sum_{n=1}^N a_n.
\end{displaymath}
\end{enumerate}
It is easy to see that in general, we cannot define $g_N^{\sharp}$ on the whole set $D^N$. However, as the following lemma points it out, it can be defined on a considerably large subset.

\begin{lemma}\label{lemma_2} The map $g_N^{\sharp}$ can be defined such that
\begin{displaymath}
\# \dom g_N^{\sharp} = k^N(1-o(1)),
\end{displaymath}
as $N\to \infty$.
\end{lemma}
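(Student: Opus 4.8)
The plan is to realise the required increment $d_2-d_1$ by the cheapest possible move: turning a single coordinate equal to $d_1$ into $d_2$, which changes the partial sum by exactly $d_2-d_1$ and so respects condition (ii). To turn these moves into a genuine bijection I would first partition $D^N$ into classes, declaring two tuples equivalent if they have the same set $I\subseteq\{1,\dots,N\}$ of coordinates with value in $\{d_1,d_2\}$ and agree on all coordinates outside $I$. Writing $r=\#I$, the tuples of a fixed class correspond bijectively to the binary strings of length $r$ (recording, at each coordinate of $I$, whether it carries $d_1$ or $d_2$), and the move $d_1\mapsto d_2$ corresponds to raising the Hamming weight of the associated string by one. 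Since such a move stays inside the class, it is enough to construct $g_N^{\sharp}$ on each class separately and take the union over the pairwise disjoint classes.

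Inside one class I would match strings of weight $w$ to strings of weight $w+1$. As there are $\binom{r}{w}$ strings of weight $w$, a largest partial injection of this form has domain of size $\sum_{w=0}^{r-1}\min\!\left(\binom{r}{w},\binom{r}{w+1}\right)$; the existence of a matching attaining this value is the elementary fact that along the chain $0\to1\to\cdots\to r$ one may select $\min\!\bigl(\binom{r}{w},\binom{r}{w+1}\bigr)$ edges at each step (an element being allowed to serve simultaneously as an image from below and as a source going up, the only constraint at each level being that both incident matching numbers are $\leq\binom{r}{w}$). Consequently the number of strings left unmatched in the class equals
\begin{displaymath}
2^r-\sum_{w=0}^{r-1}\min\!\left(\binom{r}{w},\binom{r}{w+1}\right)=1+\sum_{w=0}^{r-1}\left(\binom{r}{w}-\binom{r}{w+1}\right)^{+}=\binom{r}{\lfloor r/2\rfloor},
\end{displaymath}
the last equality holding because the positive differences are exactly those past the mode and telescope. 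By Stirling's formula this defect is $\lesssim 2^r/\sqrt{r+1}$.

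Summing the defect over all classes, and noting that there are $\binom{N}{r}$ admissible sets $I$ of size $r$ and $(k-2)^{N-r}$ ways to fill the frozen coordinates from $D\setminus\{d_1,d_2\}$, the number of tuples outside $\dom g_N^{\sharp}$ is at most
\begin{displaymath}
\sum_{r=0}^{N}\binom{N}{r}(k-2)^{N-r}\binom{r}{\lfloor r/2\rfloor}\;\lesssim\;\sum_{r=0}^{N}\binom{N}{r}(k-2)^{N-r}\frac{2^r}{\sqrt{r+1}}=k^N\,\e\!\left[\frac{1}{\sqrt{R+1}}\right],
\end{displaymath}
where I used $\sum_{r}\binom{N}{r}(k-2)^{N-r}2^r=k^N$ and where $R$ denotes the number of the first $N$ coordinates falling into $\{d_1,d_2\}$, a binomially distributed variable with parameters $N$ and $2/k$. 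Since $R$ concentrates around its mean $2N/k\to\infty$, a Chernoff bound yields $\e[(R+1)^{-1/2}]\to 0$, and therefore $\#\dom g_N^{\sharp}=k^N-o(k^N)=k^N(1-o(1))$.

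The one step that really has to be got right is the evaluation of the per-class defect as the central binomial coefficient $\binom{r}{\lfloor r/2\rfloor}$, together with the accompanying $O(2^r/\sqrt{r})$ estimate; once this $1/\sqrt{r}$ gain is in hand, the remaining work is the routine remark that $R$ is, with overwhelming probability, of size comparable to $N$, so the gain more than compensates the summation. (A less self-contained alternative would bypass the class decomposition entirely: the largest possible domain is $\sum_s\min(c_s,c_{s+\delta})$ with $c_s=\#\{a\in D^N:\sum_n a_n=s\}$ and $\delta=d_2-d_1$, and one could instead estimate $\sum_s|c_s-c_{s+\delta}|=o(k^N)$ through the local central limit theorem, the fixed shift $\delta$ being negligible against the $\sqrt{N}$ spread of the distribution.)
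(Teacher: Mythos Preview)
Your proof is correct and follows the same overall architecture as the paper: partition $D^N$ into classes according to which coordinates lie in $\{d_1,d_2\}$ and what values the remaining coordinates take, then within each class match tuples of a given $(d_1,d_2)$-weight to tuples of weight one higher.

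The difference is entirely in how the per-class defect is handled. The paper bounds it indirectly: for a class with $l$ free coordinates it shows the defect is at most $(2\eps+o(1))2^l$ for any $\eps>0$, by discarding the Chebyshev tail $l_1<l(1/2-\eps)$ and using the ratio $\binom{l}{l_1-1}/\binom{l}{l_1}$ near the mode; a second $\eps$--$\delta$ layer then controls the small-$l$ classes. You instead evaluate the defect exactly as $\binom{r}{\lfloor r/2\rfloor}$ via the telescoping identity, bound it by $O(2^r/\sqrt{r+1})$ via Stirling, and recognise the sum over classes as $k^N\,\e[(R+1)^{-1/2}]$ with $R\sim\mathrm{Bin}(N,2/k)$. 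This is cleaner and yields the quantitative bound $k^N-\#\dom g_N^{\sharp}=O(k^N/\sqrt{N})$, which the paper's soft argument does not give. Your parenthetical local-CLT alternative is also valid and would avoid the class decomposition altogether.
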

\begin{proof} First of all, split up the set $D^N$ as follows. Take any $0\leq l\leq N$, and any numbers $1\leq c_1<\ldots<c_l\leq N$. Set then $\mathbf{c}=\{c_1,\ldots,c_l\}$ and $\mathbf{c}'=\{1,\ldots,N\}\setminus\{c_1,\ldots,c_l\}$. Further, let $\mathbf{s}: \mathbf{c}' \to D\setminus\{d_1,d_2\}$. Attached to this data, set
\begin{displaymath}
D_{l,\mathbf{c},\mathbf{s}}^N=\{(a_1,\ldots,a_N)\in D^N \text{ such that } \forall c_j\in\mathbf{c}: a_{c_j} \in\{d_1,d_2\} \text{ and } \forall c' \in \mathbf{c}': a_{c'}=\mathbf{s}(c')\},
\end{displaymath}
i.e. $D_{l,\mathbf{c},\mathbf{s}}^N$ stands for those sequences which contain $d_1$'s and $d_2$'s in positions indexed by $\mathbf{c}$, while outside of $\mathbf{c}$, there is a fixed sequence $\mathbf{s}$ made of coefficients other than $d_1,d_2$.

Decompose $D_{l,\mathbf{c},\mathbf{s}}^N$ as
\begin{displaymath}
D_{l,\mathbf{c},\mathbf{s}}^N = \bigcup_{l_1=0}^l D_{l,l_1,\mathbf{c},\mathbf{s}}^N,
\end{displaymath}
where $D_{l,l_1,\mathbf{c},\mathbf{s}}^N$ is the subset of $D_{l,\mathbf{c},\mathbf{s}}^N$ which consists of sequences containing exactly $l_1$ many $d_1$'s. Obviously, $g_N^{\sharp}$ can be defined on a set of size
\begin{displaymath}
\#D^N_{l,\mathbf{c},\mathbf{s}} - \sum_{l_1=0}^l \max(0, \# D_{l,l_1,\mathbf{c},\mathbf{s}}^N- \#D_{l,l_1-1,\mathbf{c},\mathbf{s}}^N),
\end{displaymath}
namely, $g_N^{\sharp}$ maps a sequence in $D^N_{l,\mathbf{c},\mathbf{s}}$ to another one which contains one more copy of $d_2$ and one less copy of $d_1$. Clearly $\#D^N_{l,\mathbf{c},\mathbf{s}}=2^l$ and $\#D^N_{l,l_1,\mathbf{c},\mathbf{s}}= \binom{l}{l_1}$.
We claim that, for any fixed $\eps>0$,
\begin{equation}\label{eq_4}
\sum_{l_1=0}^l \max(0, \# D_{l,l_1,\mathbf{c},\mathbf{s}}^N- \#D_{l,l_1-1,\mathbf{c},\mathbf{s}}^N) \leq 2\eps 2^l + o(2^l),
\end{equation}
as $l\to \infty$. Split this summation up according to $l_1\geq l(1/2-\eps)$ and $l_1< l(1/2-\eps)$. As for the latter, even
\begin{displaymath}
\sum_{l_1< l(1/2-\eps)} \# D^N_{l,l_1,\mathbf{c},\mathbf{s}} = o(2^l),
\end{displaymath}
as $l\to \infty$, following simply from Chebyshev's inequality applied to the random walk of length $l$. Therefore, apart from $o(2^l)$ sequences in $D^N_{l,\mathbf{c},\mathbf{s}}$, we have $l_1\geq l(1/2-\eps)$. In this part of the summation,
\begin{displaymath}
\sum_{l_1\geq l(1/2-\eps)} \max(0, \# D_{l,l_1,\mathbf{c},\mathbf{s}}^N- \#D_{l,l_1-1,\mathbf{c},\mathbf{s}}^N) \leq 2^l \max_{ l(1/2-\eps)\leq l_1 \leq l/2} \left(1-\frac{\#D_{l,l_1-1,\mathbf{c},\mathbf{s}}^N}{\#D_{l,l_1,\mathbf{c},\mathbf{s}}^N}\right) \leq 2^l(2\eps+o(1)),
\end{displaymath}
hence \eqref{eq_4} is established.

It is easy to see that for any fixed $L$, as $N\to\infty$,
\begin{displaymath}
\sum_{l\leq L,\mathbf{c},\mathbf{s}}\# D^N_{l,\mathbf{c},\mathbf{s}} = o(k^N),\qquad \sum_{l > L,\mathbf{c},\mathbf{s}}\# D^N_{l,\mathbf{c},\mathbf{s}} = k^N - o(k^N).
\end{displaymath}
Now let $\delta>0$ be arbitrary. Choose $L$ such that \eqref{eq_4} can be continued as  $2\eps 2^l + o(2^l) < 3\eps 2^l$ for any $N\geq l>L$. Then, with this fixed $L$, if $N$ is large enough, at least $(1-\delta)k^N$ sequences in $D^N$ satisfies $l>L$ (with $l$ standing for the total number of $d_1$'s and $d_2$'s). This altogether yields that $g_N^{\sharp}$ can be defined on a set of size at least $k^N(1-\delta)(1-3\eps)$. Since $\delta>0$ and $\eps>0$ are arbitrary, this completes the proof.
\end{proof}

\begin{remark}\label{remark_1} Similarly we can define the functions $g_N^{\flat}$ which map $(a_1,\ldots,a_N)$ to $(b_1,\ldots,b_N)$ such that
\begin{displaymath}
\sum_{n=1}^N b_n = (d_1-d_2) + \sum_{n=1}^N a_n,
\end{displaymath}
and $g_N^{\flat}$ are bijections between their domain and range. The same argument as that in the proof of Lemma~\ref{lemma_2} gives
\begin{displaymath}
\# \dom g_N^{\flat} = k^N(1-o(1)),
\end{displaymath}
as $N\to \infty$, for a well-chosen function $g_N^{\flat}$.
\end{remark}

From now on, fix two sequences of such functions $g_N^{\sharp}$ and $g_N^{\flat}$ (with domains of size $k^N(1-o(1))$).

\begin{lemma}\label{lemma_3} Both $\mu_+$ and $\mu_-$ are invariant under translations by $d_2-d_1$, i.e. for any Borel set $B\subseteq \RR$,
\begin{displaymath}
\mu_+(B+d_2-d_1)=\mu_+(B),\qquad \mu_-(B+d_2-d_1)=\mu_-(B).
\end{displaymath}
\end{lemma}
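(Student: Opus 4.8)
The plan is to lift the combinatorial maps $g_N^{\sharp}$ to measure-preserving bijections of (most of) the probability space $\Omega$, and then to exploit that altering only finitely many coefficients shifts the limsup and liminf by a controlled constant. For each $N$, write $\mathcal{D}_N=\dom g_N^{\sharp}\subseteq D^N$ and let $\mathcal{R}_N$ be its range. Define $G_N$ on $\Omega$ by
\[
G_N((a_n)) = (b_1,\ldots,b_N, a_{N+1}, a_{N+2},\ldots), \qquad (b_1,\ldots,b_N)=g_N^{\sharp}((a_1,\ldots,a_N)),
\]
whenever $(a_1,\ldots,a_N)\in\mathcal{D}_N$, leaving the tail untouched. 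Since $g_N^{\sharp}$ is a bijection between $\mathcal{D}_N$ and $\mathcal{R}_N$ and $G_N$ is the identity on coordinates beyond $N$, the map $G_N$ is a bijection from $\Omega_N^{\mathrm{dom}}:=\{(a_n):(a_1,\ldots,a_N)\in\mathcal{D}_N\}$ onto $\Omega_N^{\mathrm{ran}}:=\{(a_n):(a_1,\ldots,a_N)\in\mathcal{R}_N\}$. Because the coordinates are i.i.d. uniform, every prescribed value of the first $N$ coordinates carries the same weight $k^{-N}$, so $G_N$ preserves $\p$ (as a map between its domain and range, each equipped with the restriction of $\p$); and Lemma~\ref{lemma_2} gives $\p(\Omega_N^{\mathrm{dom}})=\p(\Omega_N^{\mathrm{ran}})=1-o(1)$ as $N\to\infty$.

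The analytic point is that $G_N$ shifts the limsup by exactly $d_2-d_1$. Indeed, for $(a_n)\in\Omega_N^{\mathrm{dom}}$ with image $(b_n)$ the two series differ in only finitely many terms, so
\[
f_{G_N((a_n))}(x) - f_{(a_n)}(x) = \sum_{n=1}^N (b_n-a_n)x^n,
\]
and by condition (ii) defining $g_N^{\sharp}$ this polynomial tends to $\sum_{n=1}^N(b_n-a_n)=d_2-d_1$ as $x\to 1-$. Adding a function that converges to a constant shifts both the limsup and the liminf by that constant, hence $\limsup_{x\to1-} f_{G_N((a_n))}(x)=\limsup_{x\to1-} f_{(a_n)}(x)+(d_2-d_1)$, and similarly for the liminf. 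Consequently, for any Borel set $B$, the bijection $G_N$ carries $\{\limsup_{x\to1-}f\in B\}\cap\Omega_N^{\mathrm{dom}}$ exactly onto $\{\limsup_{x\to1-}f\in B+d_2-d_1\}\cap\Omega_N^{\mathrm{ran}}$.

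Finally I would combine measure-preservation with the size estimate. As $G_N$ preserves $\p$, the two events just matched have equal probability; and since $\Omega_N^{\mathrm{dom}}$ and $\Omega_N^{\mathrm{ran}}$ each have measure $1-o(1)$, intersecting with them perturbs each probability by at most $o(1)$. This yields $|\mu_+(B)-\mu_+(B+d_2-d_1)|=o(1)$ as $N\to\infty$. But the left-hand side is independent of $N$, so it must vanish, giving $\mu_+(B)=\mu_+(B+d_2-d_1)$; running the identical argument with $\liminf$ in place of $\limsup$ gives the claim for $\mu_-$.

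The step to watch is exactly this last one: $G_N$ is not a self-bijection of all of $\Omega$ but only of a subset of full-in-the-limit measure, so invariance cannot be read off from a single value of $N$. The argument genuinely requires that the $o(1)$ discrepancy be annihilated by letting $N\to\infty$ while the quantity $|\mu_+(B)-\mu_+(B+d_2-d_1)|$ stays fixed. The second thing to verify carefully is that the lift really preserves $\p$ even though $\mathcal{D}_N\neq\mathcal{R}_N$ as subsets of $D^N$ — this is precisely where the uniformity of the coordinate distribution (giving every first-$N$-coordinate pattern the common weight $k^{-N}$) is used.
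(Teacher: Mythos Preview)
Your proof is correct and follows essentially the same approach as the paper: lift $g_N^{\sharp}$ to a $\p$-preserving bijection $G_N$ on a subset of $\Omega$ of probability $1-o(1)$, check that it shifts the $\limsup$ (resp.\ $\liminf$) by exactly $d_2-d_1$, and let $N\to\infty$. The one noteworthy difference is in how you close the argument. The paper obtains only the one-sided inequality $\mu(B+d_2-d_1)\geq\mu(B)$ from $G_N^{\sharp}$ and then invokes a separately constructed map $g_N^{\flat}$ (Remark~\ref{remark_1}) for the reverse inequality. You instead observe that, since $g_N^{\sharp}$ is a bijection, its range $\mathcal{R}_N$ automatically has the same cardinality $k^N(1-o(1))$ as its domain, so $\Omega_N^{\mathrm{ran}}$ also has probability $1-o(1)$; this gives the two-sided estimate $|\mu_+(B)-\mu_+(B+d_2-d_1)|=o(1)$ directly, without constructing $g_N^{\flat}$. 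This is a small but genuine streamlining.
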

\begin{proof} Let $\mu=\mu_+$, the argument for $\mu_-$ is literally the same, writing $\liminf$'s in place of $\limsup$'s. Fix first $\eps>0$.

Define the function $L$ for $S\subseteq \RR$ as follows:
\begin{displaymath}
L(S)=\left\{(a_n)\in\Omega: \limsup_{x\to 1-} \sum_{n=1}^{\infty} a_nx^n \in S\right\}.
\end{displaymath}

On certain sequences $(a_n) \in \Omega$, apply the following sequence of operations: if $(a_1,\ldots,a_N) \in \dom g_N^{\sharp}$, then let
\begin{displaymath}
G_N^{\sharp}((a_n))=(b_n),
\end{displaymath}
where $g_N^{\sharp}((a_1,\ldots,a_N))=(b_1,\ldots,b_N)$, and $b_n=a_n$ for all $n>N$. Now observe that
\begin{displaymath}
\begin{split}
\limsup_{x\to 1-} \sum_{n=1}^{\infty} b_nx^n & = \sum_{n=1}^N b_n + \limsup_{x\to 1-} \sum_{n=N+1}^{\infty} b_nx^n \\ & = d_2-d_1+ \sum_{n=1}^N a_n + \limsup_{x\to 1-} \sum_{n=N+1}^{\infty} a_nx^n = d_2-d_1+ \limsup_{x\to 1-} \sum_{n=1}^{\infty} a_nx^n.
\end{split}
\end{displaymath}
This altogether means that if $(a_n)\in \dom G_N^{\sharp}\cap L(B)$, then $G_N^{\sharp}((a_n))\in L(B+d_2-d_1)$ for any Borel set $B\subseteq \RR$.

By Lemma~\ref{lemma_2}, if $N$ is large enough, $\p(\dom G_N^{\sharp})\geq 1-\eps$, implying $\p(\dom G_N^{\sharp} \cap L(B)) \geq \mu(B)-\eps$. Then, using the simple fact that $G_N^{\sharp}$ preserves $\p$ on its domain, we see
\begin{displaymath}
\mu(B+d_2-d_1)=\p(L(B+d_2-d_1)) \geq \p(G_N^{\sharp}(\dom G_N^{\sharp}\cap L(B))) \geq \mu(B)-\eps.
\end{displaymath}
Since this holds for all $\eps>0$, we obtain
\begin{displaymath}
\mu(B+d_2-d_1)\geq \mu(B).
\end{displaymath}

The same way we obtain $\mu(B-d_2+d_1)\geq \mu(B)$ (see also Remark~\ref{remark_1}), which yields the statement.
\end{proof}

It is well-known (and simple) that there are no nontrivial finite Borel measures on $\RR$ which are invariant under a nontrivial translation, implying $\mu_+(\RR)=\mu_-(\RR)=0$. As it was mentioned in the introduction of this section, this completes the proof of Proposition~\ref{proposition}.

\section{The case of non-vanishing expected value}\label{sec:non-vanishing}

In this section, we prove Theorem~\ref{theorem_1}. Since the two propositions of the theorem are symmetric, we assume $\sum_{j=1}^k d_j > 0$ for the rest of this section.

\begin{lemma} There exists some $K\in \RR$ such that with positive probability, $\sum_{n=1}^{\infty} a_nx^n > K$ holds for any $0<x<1$.
\end{lemma}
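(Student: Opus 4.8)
The plan is to translate the statement about the function $f$ into one about the partial sums $S_N=\sum_{n=1}^N a_n$ of the underlying random walk, and then to exploit that this walk has positive drift. First I would record the Abel (partial) summation identity: writing $S_0=0$, for every $0<x<1$,
\[
\sum_{n=1}^N a_nx^n = S_Nx^N + (1-x)\sum_{n=1}^{N-1} S_nx^n,
\]
and since $|S_n|\leq n\max_j|d_j|$ the term $S_Nx^N$ tends to $0$ while the series converges absolutely, so letting $N\to\infty$ gives
\[
f(x)=(1-x)\sum_{n=1}^{\infty} S_nx^n.
\]
The virtue of this formula is that a lower bound on $f$ uniform in $x$ follows from a single lower bound on the whole sequence $(S_n)$: if $S_n\geq -M$ for every $n$, then termwise $f(x)\geq (1-x)(-M)\sum_{n=1}^{\infty} x^n = -Mx$, and for $M>0$ and $0<x<1$ this is strictly larger than $-M$.

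The key probabilistic input is that $(S_n)$ drifts to $+\infty$. Indeed, $\e(a_1)=\tfrac1k\sum_{j=1}^k d_j>0$ by assumption, so the strong law of large numbers yields $S_n/n\to \e(a_1)>0$ almost surely, whence $S_n\to+\infty$ and in particular $\inf_n S_n>-\infty$ with probability $1$. Consequently $\p(\inf_n S_n\geq -M)\to 1$ as $M\to\infty$, so I may fix a single $M>0$ for which the event $E_M=\{\inf_n S_n\geq -M\}$ has positive probability.

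Putting the two ingredients together finishes the argument: on the positive-probability event $E_M$ we have $S_n\geq -M$ for all $n$, and the displayed bound gives $f(x)\geq -Mx>-M$ for every $0<x<1$. Hence $K=-M$ works.

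The only genuinely delicate point is the demand that the inequality hold for all $x\in(0,1)$ \emph{simultaneously}, rather than merely in the limit $x\to 1-$ or on some subinterval. This is precisely what the partial summation identity buys us: it transfers a global lower bound on the random walk into a global (in $x$) lower bound on $f$, so the entire difficulty is concentrated in guaranteeing $\inf_n S_n>-\infty$, which the positive drift supplies. I expect the main thing to get right to be the verification that $\inf_n S_n$ is almost surely finite — equivalently, that a positive-drift walk on a finite step set does not descend to $-\infty$ — although via the strong law this is entirely standard.
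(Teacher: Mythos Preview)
Your proof is correct and follows essentially the same route as the paper: both arguments use Abel (partial) summation to rewrite $f(x)=(1-x)\sum_{n\geq 1}S_nx^n$, invoke the strong law of large numbers to force $S_n\to+\infty$ almost surely, and then extract a positive-probability event on which $(S_n)$ is uniformly bounded below, yielding a uniform lower bound $K$ for $f$. The only cosmetic difference is that the paper fixes $m$ with $\p(S_l>0\text{ for all }l>m)>0$ and deduces $S_l>m\cdot\min D$ for all $l$, whereas you go directly through $\p(\inf_n S_n\geq -M)>0$ for large $M$; both are equivalent ways of packaging the same idea.
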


\begin{proof} If $\min D\geq 0$, then $K=0$ obviously does the job (the probability in question is just $1$), so assume $\min D<0$ from now on. Set $S_l=\sum_{n=1}^l a_n$ for the partial sums of $\sum_{n=1}^{\infty} a_n$. Then $S_l$ is the sum of $l$ independent and identically distributed random variables. Since $\sum_{j=1}^k d_j > 0$, the expected value of such a random variable is positive. Consequently, by the strong law of large numbers, 
\begin{displaymath} 
\p\left(\sum_{n=1}^{\infty} a_n=\infty\right)=1. 
\end{displaymath}
Using the notation $A_m=\{(a_n)\in\Omega: \sum_{n=1}^{l} a_n > 0 \text{ for any } l>m\}$, this implies, in particular,
\begin{displaymath}
\p\left(\bigcup_{m=1}^{\infty}A_m\right)=1.
\end{displaymath}
This yields that there exists some $m\in \NN$ satisfying $\p(A_m)>0$. Fixing such an $m$, we have
\begin{displaymath}
\p(S_l>m \cdot \min D  \text{ for any } l)>0.
\end{displaymath}

For any $0<x<1$, it is immediate that both $\sum_{n=1}^{\infty} a_nx^n$ and $\sum_{n=1}^{\infty} S_nx^n$ are absolutely convergent, since $a_n\ll_D 1$\footnote{Here, we apply Vinogradov's notation: $A\ll B$ means $|A|\leq cB$ for some constant $c$, while $D$ in the subscript means that this constant $c$ depends only on $D$.} and $S_n\ll_D n$. Then, on the set $\{S_l>m \cdot \min D  \text{ for any } l\}$, by partial summation,
\begin{displaymath}
\sum_{n=1}^{\infty} a_nx^n= \sum_{n=1}^{\infty} (S_n-S_{n-1})x^n = \sum_{n=1}^{\infty} S_n(x^n-x^{n+1})> m \cdot \min D \sum_{n=1}^{\infty} (x^n-x^{n+1})= m\cdot \min D \cdot x.
\end{displaymath}
Therefore, $K=m \cdot \min D$ is an appropriate choice for $K$ in the statement, the set in question is $\{S_l>m \cdot \min D  \text{ for any } l\}$, which is above shown to have positive probability.
\end{proof}

This implies, in particular, that
\begin{displaymath}
\p(\text{$f$ satisfies \eqref{eq_2} or \eqref{eq_3}})<1,
\end{displaymath}
therefore, by Proposition~\ref{proposition},
\begin{equation}\label{eq_5}
\p(\text{$f$ satisfies \eqref{eq_1}})>0.
\end{equation}
Now we finish the proof of Theorem~\ref{theorem_1} by a standard application of Kolmogorov's 0-1 law. Since it will be used once more in the next section, we formulate it as a lemma.

\begin{lemma}\label{lemma_5}
We have
\begin{displaymath}
\p(\text{$f$ satisfies \eqref{eq_1}}), \p(\text{$f$ satisfies \eqref{eq_2}}) \in \{0,1\}.
\end{displaymath}
\end{lemma}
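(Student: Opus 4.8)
The plan is to show that properties \eqref{eq_1} and \eqref{eq_2} are \emph{tail events}, and then invoke Kolmogorov's 0-1 law to conclude that each has probability either $0$ or $1$. Recall that the underlying probability space is the product $\Omega=\prod_{n\in\NN}D$ with the coefficients $a_n$ being independent random variables, so Kolmogorov's 0-1 law applies to any event in the tail $\sigma$-algebra $\bigcap_{N=1}^{\infty}\sigma(a_{N+1},a_{N+2},\ldots)$.

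The key observation is that modifying finitely many coefficients does not affect whether $f$ satisfies \eqref{eq_1} or \eqref{eq_2}. Concretely, suppose $(a_n)$ and $(a_n')$ differ only in coordinates $n\leq N$ for some fixed $N$. Then their partial tails agree, and
\begin{displaymath}
\sum_{n=1}^{\infty}a_n'x^n-\sum_{n=1}^{\infty}a_nx^n=\sum_{n=1}^{N}(a_n'-a_n)x^n
\end{displaymath}
is a polynomial, hence bounded on $[0,1]$ (in fact it tends to the finite limit $\sum_{n=1}^{N}(a_n'-a_n)$ as $x\to 1-$). Adding a bounded quantity changes neither $\lim_{x\to 1-}f(x)=\infty$ nor $\lim_{x\to 1-}f(x)=-\infty$. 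Therefore the event that $f$ satisfies \eqref{eq_1} is invariant under changes in any finite set of coordinates, which is exactly the statement that it belongs to the tail $\sigma$-algebra; the same holds verbatim for \eqref{eq_2}. (That these events are measurable in the first place was already established via Lemma~\ref{lemma_1}.)

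The main obstacle, if any, is purely formal: one must phrase ``invariant under finite modifications'' precisely enough to identify the event with a genuine element of the tail $\sigma$-algebra, rather than merely an event that happens to be insensitive to finitely many coordinates. This is handled by noting that for each fixed $N$, the event can be rewritten using only the coordinates $a_{N+1},a_{N+2},\ldots$: since the difference above is bounded, $\lim_{x\to 1-}\sum_{n=1}^{\infty}a_nx^n=\infty$ holds if and only if $\lim_{x\to 1-}\sum_{n=N+1}^{\infty}a_nx^n=\infty$, and the latter is $\sigma(a_{N+1},a_{N+2},\ldots)$-measurable. As this holds for every $N$, the event lies in the intersection $\bigcap_{N}\sigma(a_{N+1},\ldots)$, and Kolmogorov's 0-1 law gives the claim.
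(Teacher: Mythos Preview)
Your proposal is correct and follows essentially the same approach as the paper's proof: both argue that the events $\{\lim_{x\to 1-} f(x)=\pm\infty\}$ are tail events because $\lim_{x\to 1-}\sum_{n=1}^{\infty}a_nx^n=\pm\infty$ if and only if $\lim_{x\to 1-}\sum_{n=N+1}^{\infty}a_nx^n=\pm\infty$ for every $N$, and then invoke Kolmogorov's 0-1 law. Your version is slightly more explicit about the $\sigma$-algebra formalities, but the idea and route are the same.
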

\begin{proof} It is easy to see that the events
\begin{displaymath}
A_{\pm}=\left\{(a_n)\in\Omega: \lim_{x\to 1-} \sum_{n=1}^{\infty} a_nx^n=\pm \infty\right\}
\end{displaymath}
are tail events in the sense of \cite[Section 16.3]{L}, since
\begin{displaymath}
\lim_{x\to 1-} \sum_{n=1}^{\infty} a_nx^n=\pm \infty \text{ if and only if } \lim_{x\to 1-} \sum_{n=N+1}^{\infty} a_nx^n=\pm \infty
\end{displaymath}
holds for any $N\in \NN$, implying that the events $A_{\pm}$ are independent of the first few coefficients $a_1,\ldots,a_N$. Tail events have probability $0$ or $1$ by Kolmogorov's 0-1 law, see \cite[Theorem 16.3 B]{L}.
\end{proof}

Now combining \eqref{eq_5} with Lemma~\ref{lemma_5}, the proof of Theorem~\ref{theorem_1} is complete.

\section{The case of vanishing expected value}

In this section, we are going to prove Theorem~\ref{theorem_2}, so assume $\sum_{j=1}^k d_j=0$. We introduce the following permutation $p$ on $D$: $p(d_j)=d_{j+1}$ for $1\leq j \leq k-1$, and $p(d_k)=d_1$. This gives rise to a permutation $\pp$ on $\Omega$: $\pp((a_n))=(b_n)$, where $b_n=p(a_n)$ for each $n\in\NN$. Now for any $0<x<1$, by absolute convergence,
\begin{displaymath}
\sum_{j=0}^{k-1} f_{\pp^j((a_n))}(x) = \sum_{j=0}^{k-1} \sum_{n=1}^{\infty} p^j(a_n)x^n = \sum_{n=1}^{\infty} \left( \sum_{j=0}^{k-1} p^j(a_n) \right) x^n = \sum_{n=1}^{\infty} \left( \sum_{j=1}^k d_j \right) x^n =0.
\end{displaymath}
Consequently, for any $(a_n)\in\Omega$ and any $0<x<1$, among $f_{(a_n)}(x),f_{\pp((a_n))}(x),\ldots,f_{\pp^{k-1}((a_n))}(x)$ there is at least one nonnegative and at least one nonpositive number. In other words, for any $(a_n)\in\Omega$, as $x\to 1-$, at least one of $f_{(a_n)}(x),f_{\pp((a_n))}(x),\ldots,f_{\pp^{k-1}((a_n))}(x)$ violates \eqref{eq_1} (that one which is nonpositive for some $x$'s arbitrarily close to $1$) and at least one violates \eqref{eq_2} (that one which is nonnegative for some $x$'s arbitrarily close to $1$).

Also, it is easy to see that $\pp$ is $\p$-preserving, altogether yielding
\begin{displaymath}
\p(\text{$f$ satisfies \eqref{eq_1}}), \p(\text{$f$ satisfies \eqref{eq_2}}) \leq 1-1/k.
\end{displaymath}
This, combined with Lemma~\ref{lemma_5}, gives
\begin{displaymath}
\p(\text{$f$ satisfies \eqref{eq_1}}), \p(\text{$f$ satisfies \eqref{eq_2}}) =0.
\end{displaymath}
Now Theorem~\ref{theorem_2} follows from Proposition~{\ref{proposition}.

\section{About residuality}\label{sec:residuality}

In this section, we prove Theorem~\ref{theorem_3}. First assume that each element of $D$ is nonnegative. In this case, we have $\lim_{x\to{1-}}f(x)\neq \infty$ if and only if the sequence of the coefficients contains only finitely many nonzero elements. However, the set $E$ of these sequences is of first category. Indeed, write $E=\bigcup_{m=1}^{\infty}E_m$ where $E_m$ denotes the set of sequences for which $a_n=0$ holds for $n\geq m$. It suffices to see that $E_m$ is nowhere dense in $\Omega$ (for each $m\in\NN$). Given any nonempty open set $U$, it has a nonempty open subset
\begin{displaymath}
V=\{(a_n)\in\Omega \mid a_1=b_1, a_2=b_2, \ldots, a_j=b_j\},
\end{displaymath}
where $j\in\NN$ and $b_1,\ldots,b_j\in{D}$. Now define
\begin{displaymath}
W= \{(a_n)\in\Omega \mid a_1=b_1, a_2=b_2, \ldots, a_j=b_j, a_{\max(j,m)+1} = b\},
\end{displaymath}
where we choose $b\in D$ to be nonzero. Clearly $W\subseteq U$ is nonempty, open, and $W\cap E_m=\emptyset$, therefore the proof of the first statement is complete (the case when each element of $D$ is nonpositive follows by symmetry).

Now let us consider the case in which $D$ contains positive and negative elements simultaneously. One can easily see that $\limsup_{x\to{1-}}f(x)=\infty$ holds if and only if $\sup_{x\in(0,1)}f(x)=\infty$ and $\liminf_{x\to{1-}}f(x)=-\infty$ holds if and only if $\inf_{x\in(0,1)}f(x)=-\infty$. Thus it suffices to prove that $\sup_{x\in(0,1)}f(x)\neq +\infty$ or $\inf_{x\in(0,1)}f(x)\neq -\infty$ hold only in a set of first category. By symmetry, we can focus on the set $F$ where $\sup_{x\in(0,1)}f(x)\neq \infty$ holds. Write it as a countable union $F=\bigcup_{n=1}^{\infty}F_m$ where $F_m$ contains the sequences for which $\sup_{x\in(0,1)}f(x)\leq{} m$. It suffices to see that $F_m$ is nowhere dense in $\Omega$ (for each $m\in\NN$).

Given any nonempty open set $U$, it has a nonempty open subset
\begin{displaymath}
V=\{(a_n)\in\Omega \mid a_1=b_1, a_2=b_2, \ldots, a_j=b_j\},
\end{displaymath}
where $j\in\NN$ and $b_1,\ldots,b_j\in{D}$. Set
\begin{displaymath}
R=\inf_{x\in(0,1)} \sum_{n=1}^{j}{b_nx^n}.
\end{displaymath}
Choose an integer $M>j$ satisfying also $M>(m+1-R)/(\max D) + j$, then
\begin{displaymath}
R+ \max D \sum_{n=j+1}^M 1 > m+1.
\end{displaymath}
Now fix $x<1$ close enough to $1$ such that
\begin{displaymath}
R+\max D \sum_{n=j+1}^M x^n > m+1.
\end{displaymath}
Then choose $N>M$ large enough such that $|\min D \sum_{n=N+1}^{\infty} x^n| < 1$. Taking
\begin{displaymath}
W= \{(a_n)\in\Omega \mid a_1=b_1, a_2=b_2, \ldots, a_j=b_j, a_{j+1}=\ldots=a_N = \max D\},
\end{displaymath}
we have, for $(a_n)\in W$,
\begin{displaymath}
\begin{split}
\sum_{n=1}^{\infty} a_n x^n & \geq \sum_{n=1}^j b_n x^n + \sum_{n=j+1}^N \max D \cdot x^n + \sum_{n=N+1}^{\infty} \min D \cdot x^n \\ & \geq R + \max D \sum_{i=j+1}^N x^n + \min D \sum_{n=N+1}^{\infty} x^n > m+1-1 = m.
\end{split}
\end{displaymath}
Therefore, $W\cap F_m=\emptyset$, and since $W\subseteq U$ is nonempty and open, the proof of Theorem~\ref{theorem_3} is complete. 

\section{Concluding remarks}

It would be interesting to investigate the question of non-uniform distributions, i.e. when $D=\{d_1,\ldots,d_k\}$ and the positive numbers $p_1,\ldots,p_k$ are given such that $\sum_{j=1}^k p_j=1$, and each coefficient takes the value $d_j$ with probability $p_j$ for $1\leq j\leq k$.

Proposition~\ref{proposition} can be proved similarly, apart from the following subtlety. Assuming $p_1\leq p_2$, take the function $g_N^{\sharp}$ with the same properties as above. Then the resulting function $G_N^{\sharp}$ in the proof of Lemma~\ref{lemma_3} does not preserve $\p$ for $p_2>p_1$, but increases it. (Similarly, $G_N^{\flat}$ is $\p$-decreasing.) All in all, although our measure $\mu=\mu_{\pm}$ will not be invariant any more under the translation by $d_2-d_1$, we still have
\begin{displaymath}
\mu(B+d_2-d_1)\geq \mu(B)
\end{displaymath}
for any Borel set $B\subseteq\RR$, and there is no such finite measure on $\RR$ other than the trivial one.

As for Theorem~\ref{theorem_1}, its proof is literally the same, the argument in Section~\ref{sec:non-vanishing} nowhere uses that the coefficients are chosen through a uniform distribution.

However, the statement of Theorem~\ref{theorem_2} for non-uniform distributions remains an open question, we do not see any obvious modification of the argument that would work for general distributions.

\end{document}